\theoremstyle{plain}
\newtheorem{theorem}{Theorem}[section]
\newtheorem{proposition}{Proposition}[section]
\newtheorem{corollary}{Corollary}[section]
\theoremstyle{definition}
\theoremstyle{remark}
\newtheorem{remark}{Remark}[section]
\title{Pfister's theorem fails in the Hermitian case}
\author{John P. D'Angelo}
\address{Dept. of Mathematics, Univ. of Illinois, 1409 W. Green St., Urbana IL 61801}
\email{jpda@math.uiuc.edu}
\author{Ji\v{r}\'\i\ Lebl}
\address{Dept. of Mathematics, Univ. of Illinois, 1409 W. Green St., Urbana IL 61801}
\email{jlebl@math.uiuc.edu}
\begin{document}

\maketitle

\begin{abstract} We show that the Hermitian analogue of a famous result
of Pfister fails. To do so we provide a Hermitian symmetric polynomial $r$ of total degree $2d$ such that any
non-zero multiple of it cannot be written as a Hermitian sum of squares
with fewer than $d+1$ squares.

\medskip

\noindent
{\bf AMS Classification Numbers}: 12D15, 14P05, 15B57, 32V15.

\medskip

\noindent
{\bf Key Words}: Hilbert's $17$-th problem, Hermitian forms, sums of squares, Hermitian length, Huang lemma.
\end{abstract}

\section{Introduction}

Artin's solution of Hilbert's $17$-th problem [A] includes the following statement. Let $r$ be a polynomial in $n$ real variables.
Then $r \ge 0$ if and only if there is a polynomial $q$, not identically $0$, such that $q^2 r$ is a sum of squares of polynomials. See also [S] for recent developments.
Pfister [Pf] proved that we may always choose $q$ such that the number of terms in the sum
is at most $2^n$. This result is remarkable because the number of terms (the length of the sum
of squares of $q^2 r$) is independent of the degree of $r$.

See [Q], [D2], [D3] and their references for Hermitian analogues of Hilbert's problem.
Related work by both authors ([D1], [DL], [L])
on applications of Hermitian symmetric polynomials to CR geometry have led us to a simple counterexample
to the natural Hermitian analogue of Pfister's result. 

Let $r(z, {\overline w})$ be a polynomial on ${\bf C}^n \times{\bf C}^n$.  Using multi-index notation we write
$$ r(z,{\overline w}) = \sum c_{\alpha, \beta} z^\alpha {\overline w}^\beta. $$
We let ${\bf r}(r)$ denote the rank of the matrix $c_{\alpha, \beta}$.
The function $z \to r(z,{\overline z})$ is real-valued if and only if this matrix  is Hermitian.
The function $z \to r(z,{\overline z})$ is a {\it squared norm} or {\it Hermitian sum of squares}
if and only if this matrix is non-negative definite. In this case there are polynomials $p_1(z),...,p_k(z)$ for which
$$ r(z,{\overline z}) = \sum_{j=1}^k |p_j(z)|^2 = ||p(z)||^2. \eqno (1) $$
By linear algebra it follows that ${\bf r}(r)$ is the minimum $k$ for which (1) holds.
Hence one might call the rank of a squared norm its {\it Hermitian length}.

Unlike in the real case, not every non-negative Hermitian polynomial $r$ divides a squared norm.
Suppose however that $r$ does so; in other words, assume that there is a polynomial $s$, not
identically $0$, such that $rs = ||p||^2$. 
We naturally ask what bounds are possible on the rank of $||p||^2$.
We prove below that there is no bound independent of the degree of $r$. In particular, the Hermitian analogue of
Pfister's result fails.

We give the simple example now in Corollary 1.1; we also state Theorem 1.1 from which the result follows. We prove Theorem 1.1
in the next section. Corollary 1.1 shows that the Hermitian length of every nonzero squared norm
divisible by $r$ is at least $d+1$, and hence the Hermitian analogue of Pfister's result fails.

\begin{corollary} Put $n=1$ and set $r(z,{\overline z})= (1+ |z|^2)^d$. Assume that $||p||^2$
is a multiple of $r$ and that $p$ is not identically $0$.
Then ${\bf r}(||p||^2) \ge d+1$, and equality is possible. \end{corollary}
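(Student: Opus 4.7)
My plan is to argue by contradiction: suppose $\mathbf{r}(\|p\|^2) = k \le d$. After replacing the $p_j$ by a basis of their span, I may assume $p_1,\ldots,p_k$ are linearly independent and $\|p\|^2 = \sum_{j=1}^k |p_j(z)|^2$. The central move is to polarize: the hypothesis $\|p\|^2 = (1+|z|^2)^d s(z,\bar z)$ becomes the polynomial identity
$$\sum_{j=1}^k p_j(z)\,\overline{p_j}(\bar w) = (1+z\bar w)^d\, s(z,\bar w) \quad \text{in } \mathbb{C}[z,\bar w],$$
with $s\not\equiv 0$.

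Next, for each fixed $z_0\in\mathbb{C}^*$ I would read this as a polynomial identity in $\bar w$ alone. The right-hand side is divisible by $(1+z_0\bar w)^d$, so the left-hand side vanishes to order at least $d$ at $\bar w = -1/z_0$. Matching the first $d$ derivatives in $\bar w$ at that point, and using $\overline{\overline{p_j}^{(m)}(-1/z_0)} = p_j^{(m)}(-1/\bar z_0)$, produces
$$\sum_{j=1}^k p_j(z_0)\,\overline{p_j^{(m)}(w_0)} = 0 \qquad (m = 0,1,\ldots,d-1),$$
where $w_0 := -1/\bar z_0$. Equivalently, the vector $\mathbf{p}(z_0) = (p_j(z_0))_j \in \mathbb{C}^k$ is Hermitian-orthogonal to the $d$ vectors $\mathbf{p}^{(m)}(w_0) = (p_j^{(m)}(w_0))_j$.

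The contradiction comes from the Wronskian. Since $p_1,\ldots,p_k$ are linearly independent polynomials and $k \le d$, the ordinary Wronskian $W(w) = \det\bigl(p_j^{(i-1)}(w)\bigr)_{j,i=1,\ldots,k}$ is a nonzero polynomial in $w$; for every $w_0$ outside its finite zero set, the first $k$ of the above $d$ vectors $\mathbf{p}^{(m)}(w_0)$ already span $\mathbb{C}^k$, forcing $\mathbf{p}(z_0) = 0$. Since $z_0 \mapsto -1/\bar z_0$ is a homeomorphism of $\mathbb{C}^*$, this vanishing holds on a cofinite subset of $\mathbb{C}^*$, so each $p_j$ is identically zero, contradicting $s\ne 0$. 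Equality is witnessed by $s\equiv 1$, where $\|p\|^2 = (1+|z|^2)^d = \sum_{i=0}^d \binom{d}{i}|z|^{2i}$ has rank exactly $d+1$. The delicate point in the plan is the complex-conjugation bookkeeping that converts divisibility by $(1+z\bar w)^d$ into the orthogonality statement in terms of the original $p_j$ rather than some twist of them; once that is set up correctly, the Wronskian step is automatic.
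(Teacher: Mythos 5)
Your proof is correct, but it takes a genuinely different route from the paper's. The paper obtains the corollary as a special case of a multidimensional theorem: it decomposes the coefficient matrix of the product along its diagonals, uses a monomial order and an extremality argument to isolate one nonzero diagonal $\sum_\beta c_{\beta+\alpha,\beta}\lvert z^\beta\rvert^2$ whose corresponding rows of the coefficient matrix are automatically linearly independent, and then reduces to the combinatorial Proposition 2.1 --- a nonzero polynomial multiple of $(1+x)^d$ has at least $d+1$ monomials --- proved by descent via differentiation. You instead polarize the identity $||p||^2=(1+|z|^2)^d s$, fix $z_0\ne 0$, observe that $\bar w\mapsto\sum_j p_j(z_0)\overline{p_j}(\bar w)$ vanishes to order $d$ at $\bar w=-1/z_0$, and invoke the classical fact that linearly independent polynomials have a Wronskian that is not identically zero, so that the vectors ${\bf p}^{(m)}(w_0)$ for $0\le m\le k-1$ span ${\bf C}^k$ for $w_0$ outside a finite set; orthogonality to a spanning set forces ${\bf p}(z_0)=0$ on a cofinite subset of ${\bf C}^*$ and hence $p\equiv 0$, the desired contradiction. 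The polarization step (two polynomials in $z,\bar w$ agreeing when $\bar w=\bar z$ coincide) and the reduction to exactly $k$ linearly independent components (the linear algebra behind the definition of rank) deserve an explicit sentence each, but both are standard and sound. The trade-off: your argument is short, self-contained, and would apply equally to bilinear expressions $\sum f_j\overline{g_j}$ with each family independent, but the Wronskian device is essentially a one-variable tool; the paper's diagonal decomposition plus monomial-order argument yields the sharp bound ${n+d \choose d}$ in every dimension and extends to real-analytic germs (Theorem 2.1), recovering Huang's lemma, which your method does not reach.
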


Equality holds with $r$ itself. Expanding by the binomial theorem writes $r$
as a squared norm with rank $d+1$. The following stronger result holds in arbitrary dimensions and immediately implies the Corollary.
We write $||z||^2 = \sum_{j=1}^n |z_j|^2$. 

\begin{theorem} Set $r(z,{\overline z})= (1+ ||z||^2)^d$ and let $g$ be a nonzero
multiple of $r$. Then ${\bf r}(g) \ge {n+d \choose d}$, and equality is possible. 
Here ${n+d \choose d}=M(n,d)$ equals the dimension of the vector space of polynomials 
of degree at most $d$ in $n$ variables. \end{theorem}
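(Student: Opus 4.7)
The plan is to handle equality by an explicit tensor decomposition, then prove the lower bound by passing to polarized form and exhibiting $M(n,d)$ linearly independent polynomials in a naturally associated subspace of $\mathbf{C}[z]$.

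For equality, the multinomial theorem gives
\[
(1+\|z\|^2)^d = \sum_{|\alpha|\le d}\binom{d}{d-|\alpha|,\alpha_1,\ldots,\alpha_n}\,|z^\alpha|^2,
\]
expressing $r$ as a squared norm with exactly $M(n,d)$ terms; taking $g=r$ shows the bound is sharp. For the lower bound, I would polarize and write $g(z,\bar w) = (1+z\cdot\bar w)^d\,s(z,\bar w)$ with $s\not\equiv 0$. If $g(z,\bar w)=\sum_{j=1}^k P_j(z)Q_j(\bar w)$ is a minimal decomposition with linearly independent $P_j$, then $V := \mathrm{span}_{\mathbf{C}}\{g(z,\bar w_0) : \bar w_0 \in \mathbf{C}^n\} = \mathrm{span}\{P_j\}$, so $\dim V = \mathbf{r}(g)$ and the problem reduces to showing $\dim V \ge M(n,d)$.

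The heart of the argument is to produce $M = M(n,d)$ linearly independent elements of $V$. I would choose generic points $\bar w_0^{(1)}, \ldots, \bar w_0^{(M)} \in \mathbf{C}^n$ for which the polynomials $(1+z\cdot\bar w_0^{(i)})^d$ form a basis of $\mathbf{C}[z]_{\le d}$ --- a Vandermonde-type non-degeneracy that follows from the multinomial expansion $(1+z\cdot\bar w_0)^d = \sum_\alpha \binom{d}{d-|\alpha|,\alpha}\, \bar w_0^\alpha\, z^\alpha$ --- and then argue that
\[
h_i(z) := g(z,\bar w_0^{(i)}) = (1+z\cdot\bar w_0^{(i)})^d\, s(z,\bar w_0^{(i)}) \in V
\]
are themselves linearly independent. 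My strategy is: for each $i$, pick a point $z^{(i)}$ on the hyperplane $1+z\cdot\bar w_0^{(i)}=0$ such that $1+z^{(i)}\cdot\bar w_0^{(j)} \ne 0$ and $s(z^{(i)},\bar w_0^{(j)}) \ne 0$ for every $j \ne i$; this is a Zariski-open condition, non-vacuous because the hyperplanes are distinct and $s \not\equiv 0$. Then $h_i(z^{(i)})=0$ while $h_j(z^{(i)})\ne 0$ for $j\ne i$, so evaluating a hypothetical relation $\sum_i c_i h_i \equiv 0$ at each $z^{(i)}$ yields an $M\times M$ linear system in the $c_i$'s whose coefficient matrix $A$ has zero diagonal and non-zero off-diagonal entries.

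The main obstacle is to verify that $\det A \ne 0$ for at least one (hence a generic) choice of the data $(\bar w_0^{(i)}, z^{(i)})$. Since $\det A$ is a polynomial in these data, it suffices to exhibit one choice making $\det A \ne 0$. A natural place to look is the case $s \equiv 1$, for which $A_{ki} = (1+z^{(k)}\cdot\bar w_0^{(i)})^d$ reduces to a generalized Vandermonde matrix in the multinomial coefficients whose non-vanishing can be checked by a direct determinant computation; a deformation or continuity argument off this special case (using Zariski-openness of the non-degeneracy condition) then transports the non-vanishing to arbitrary $s\not\equiv 0$. Making this transport watertight --- ensuring that the $s$-dependent weights in the derangement-sum expansion of $\det A$ do not conspire to make $\det A$ vanish identically in the data --- is the technical core of the proof.
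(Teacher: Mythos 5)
Your setup is sound: polarizing to $g(z,\bar w)=(1+z\cdot\bar w)^d s(z,\bar w)$, identifying $\mathbf{r}(g)$ with the dimension of the span $V$ of the slices $z\mapsto g(z,\bar w_0)$, and settling equality by the multinomial expansion are all correct. The gap is in the linear-independence argument for the $h_i$, and it is fatal as written. A square matrix with zero diagonal and nonzero off-diagonal entries need not be invertible; for instance
$$\det\begin{pmatrix} 0 & 1 & -1 \\ 1 & 0 & 1 \\ 1 & 1 & 0\end{pmatrix}=0,$$
so the structure you extract from the evaluation points $z^{(i)}$ gives no control whatsoever on $\det A$, and the whole proof rests on the unproved claim $\det A\neq 0$. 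Your proposed repair --- compute the case $s\equiv 1$ and deform --- does not close this. Nonvanishing of $\det A$ at $s\equiv 1$ shows only that $\det A$, viewed as a polynomial in the coefficients of $s$ and in the points $(\bar w_0^{(i)},z^{(i)})$, is not identically zero, hence nonzero for \emph{generic} $s$; the theorem requires the conclusion for \emph{every} nonzero $s$, and a given $s$ may well lie in the exceptional subvariety where $\det A$ vanishes identically in the points. Worse, the statement ``for suitable $\bar w_0^{(i)}$ the slices $h_i$ are linearly independent'' is equivalent to $\dim V\ge M(n,d)$, i.e.\ to the theorem itself, so the evaluation matrix has not reduced the problem to anything simpler.

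For comparison, the paper avoids this entirely by working with the coefficient matrix directly. It splits $g$ into off-diagonal ``stripes'' $\sum_\beta c_{\beta+\alpha,\beta}z^{\beta+\alpha}\bar z^{\beta}$, chooses $\alpha$ extremal for a monomial order, and observes that the corresponding rows of the coefficient matrix are then \emph{automatically} linearly independent; hence $\mathbf{r}(g)$ is bounded below by the number of monomials in the single diagonal expression $\sum_\beta c_{\beta+\alpha,\beta}\lvert z^\beta\rvert^2$, which is still divisible by $(1+\|z\|^2)^d$. The problem thus becomes purely combinatorial --- a nonzero multiple of $(x_1+\cdots+x_n)^d$ has at least $N(n,d)$ monomials --- and is settled by induction on $n$ together with a one-variable descent: differentiating a putative minimal example divisible by $(1+x)^d$ with too few terms produces one divisible by $(1+x)^{d-1}$ with fewer terms. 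If you wish to salvage the slice approach, you need a genuinely different independence argument (point evaluation with zero diagonal cannot do it); the paper's extremal-stripe device is one way to manufacture independence for free.
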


Alternatively we can bihomogenize $r$ and use $||Z||^{2d}$, where $Z=(z_1,...,z_n,z_{n+1})$.
The restated conclusion is then that a nonzero multiple of $||Z||^{2d}$ must have rank at least $N(n+1,d)$,
where $N(n+1,d)$ denotes the dimension of the space of homogeneous polynomials of degree $d$ in $n+1$ variables.
Note that $N(n,k)$ equals the binomial coefficient ${n+k-1 \choose k}$, 
namely the rank of the function $||z||^{2k}$.
Note also that 
$$M(n,d) = \sum_{k=0}^d N(n,k) = N(n+1,d). $$ 

The homogenized version of Theorem 1.1 holds in the real-analytic case as well. See Theorem 2.1, which
generalizes a well-known lemma of Huang. Huang [H] proved the following.
Let $f_1,...,f_k, g_1,...g_k$ be holomorphic functions defined near the origin in ${\bf C}^m$ and vanishing there,
and suppose that the expression 
$$\sum_{j=1}^k f_j(z) {\overline {g_j(z)}} \eqno (2) $$ 
is divisible by $||z||^2$. If (2) is not
identically zero, then $k \ge m$. Huang's lemma is equivalent to the special case
of Theorem 2.1 when $d=1$.

The authors acknowledge support from NSF grants DMS 07-53978 (JPD) and DMS 09-00885 (JL). They also
wish to thank AIM for the workshop on CR Complexity Theory in 2010; preparing for that workshop helped lead
us to this result. Finally we thank Martin Harrison and the referee for 
pointing out several places where the exposition needed improvement.

\section{Proof of Theorem 1.1}

In this paper we assume the polynomials used have complex coefficients. We note
however that Proposition 2.1 below
holds for polynomials over an arbitrary field of characteristic zero.
First we prove the analogue of Theorem 1.1 when the matrix of coefficients is diagonal.
To finish the proof of Theorem 1.1 we reduce the general case
to the diagonal case.

\begin{proposition} Put $s(x)= \sum_{j=1}^n x_j$.
Let $p(x)$ be a homogeneous polynomial and suppose $p$
is a multiple of $s^d$. Then either $p=0$ or $p$ has at least $N(n,d)$ monomials.
\end{proposition}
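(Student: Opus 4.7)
The plan is to induct on $n+d$. For the base cases, if $d = 0$ or $n = 1$ then $N(n,d) = 1$ and any nonzero $p$ has at least one monomial, so the claim is immediate. Otherwise assume $n, d \geq 1$ and that the claim holds for all smaller values of $n + d$, and write $p = s^d q$ with $q \neq 0$.

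The strategy is to split the support of $p$ according to whether the exponent $\beta_n$ of $x_n$ is positive or zero, and apply the inductive hypothesis to two auxiliary polynomials. For the $\beta_n \geq 1$ part, consider the partial derivative $\partial p/\partial x_n$: the product rule gives $\partial p/\partial x_n = s^{d-1}(d q + s \, \partial q/\partial x_n)$, so $s^{d-1}$ divides $\partial p/\partial x_n$. Each monomial $x^\beta$ of $p$ with $\beta_n \geq 1$ produces the nonzero monomial $\beta_n x^\beta/x_n$ of $\partial p/\partial x_n$ (using characteristic zero so that $\beta_n \neq 0$), giving a bijection between the two supports. By the inductive hypothesis at $(n, d-1)$, if $\partial p/\partial x_n \neq 0$ then it has at least $N(n, d-1)$ monomials. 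For the $\beta_n = 0$ part, consider the restriction $p_0(x_1, \ldots, x_{n-1}) = p(x_1, \ldots, x_{n-1}, 0)$: this is homogeneous in $n-1$ variables, is divisible by $(x_1 + \cdots + x_{n-1})^d$, and its monomials are exactly those of $p$ with $\beta_n = 0$. By the inductive hypothesis at $(n-1, d)$, if $p_0 \neq 0$ then it has at least $N(n-1, d)$ monomials. Adding the two lower bounds and invoking the Pascal-type identity $N(n-1, d) + N(n, d-1) = N(n, d)$ yields the desired count.

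What remains is to arrange that both auxiliary polynomials are nonzero. Since $s^d$ has $x_n$-degree exactly $d \geq 1$, the $x_n$-degree of $p = s^d q$ is at least $d$ whenever $q \neq 0$, so in characteristic zero $\partial p/\partial x_n \neq 0$ automatically. If instead $p_0 = 0$, then $x_n$ divides $p$; writing $p = x_n^a \tilde p$ with $x_n \nmid \tilde p$ and noting that $\gcd(s, x_n) = 1$ in the polynomial ring forces $s^d \mid \tilde p$, the polynomial $\tilde p$ is nonzero, homogeneous, divisible by $s^d$, and has the same number of monomials as $p$. We may therefore replace $p$ by $\tilde p$ and assume $p_0 \neq 0$.

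The main obstacle is just correctly handling these degenerate reductions; the core mechanism is a clean Pascal-type recursion, and the characteristic-zero hypothesis enters only through the fact that partial differentiation does not annihilate a polynomial depending on the variable being differentiated.
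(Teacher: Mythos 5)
Your proof is correct, but it takes a genuinely different route from the paper's. The paper first isolates a one-variable lemma --- a nonzero multiple of $(1+x)^d$ has at least $d+1$ terms, proved by descent via differentiation --- and then runs an induction on the number of variables by expanding $p=(x+s)^d\sum_j h_j(y)x^j=\sum_j A_j(y)x^j$, using the one-variable lemma (after substituting $x=ws$) to produce $d+1$ nonzero coefficients $A_{j_0},\dots,A_{j_d}$ with $s^{d-k}\mid A_{j_k}$, and summing the resulting counts via $\sum_{k=0}^d N(n,k)=N(n+1,d)$. You instead run a single induction on $n+d$, splitting the support of $p$ according to whether a monomial involves $x_n$: the monomials involving $x_n$ are counted by $\partial p/\partial x_n$, which is divisible by $s^{d-1}$, and the rest by the restriction $p|_{x_n=0}$, which is divisible by $(x_1+\cdots+x_{n-1})^d$; the Pascal identity $N(n-1,d)+N(n,d-1)=N(n,d)$ then closes the induction. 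Your reductions guaranteeing that both auxiliary polynomials are nonzero are sound: the $x_n$-degree of $s^dq$ is at least $d\ge 1$, so the derivative survives in characteristic zero, and when $p|_{x_n=0}=0$ you may divide out the largest power of $x_n$ because $s$ is coprime to $x_n$, preserving both the divisibility by $s^d$ and the monomial count. Your route absorbs the paper's one-variable descent into the general inductive step and avoids the paper's somewhat delicate minimality argument for the indices $j_k$ and the only-sketched claim $s^{d-k}\mid A_{j_k}$; the paper's route, in exchange, exhibits the explicit layered structure of the $A_{j_k}$ and yields as a byproduct the standalone one-variable statement that the authors single out after the proof.
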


\begin{proof} We first observe that the result is trivial when $n=1$, as $N(1,d)=1$ for all $d$.
We next consider the case $n=2$. Note that $N(2,d)=d+1$ for all $d$.
After dehomogenizing, it suffices to prove the following statement in one variable, for which
we have found two proofs. If the polynomial $p$ defined by $p(x) = (1+x)^d q(x)$ is not identically zero, 
then $p$ has at least $d+1$ terms. 

The first proof is by the method of descent. Suppose that there is an integer $d$ and polynomials $q$
and $r$ such that $r(x) = (1+x)^d q(x)$, and such that $q$ has at most $d$ terms.
Then there is a smallest such $d$. If the resulting polynomial $q$ is divsible by $x$, then $r$ also is, and we
divide both sides by $x$. We may therefore assume that either $q$ is identically zero, or that $q$ has a nonzero constant term.
In the second situation, differentiate both sides to obtain

$$ r'(x) = (1+x)^{d-1} ( d q(x) + (1+x) q'(x)). $$
Now $r'$ has at most $d-1$ terms, and it is divisible by $(1+x)^{d-1}$.
Hence there is an example with $d$ replaced by $d-1$. Since there is no example for $d=0$ other than $r$ being identically $0$,  we conclude that
there is no $d$ at all for which $r$ is not identically $0$.

The second proof is more complicated. Let $p$ be an arbitrary polynomial of degree $m+d$. We write $p(x)$ in two ways:

$$ p(x) = \sum_{k=0}^{m+d}  a_k x^k = \sum_{k=0}^{m+d} c_k (1+x)^k.  \eqno (3) $$
The two formulas amount to different choices of basis in the space of polynomials of degree $m+d$.
The mapping $L:{\bf C}^{m+d+1} \to {\bf C}^{m+d+1}$
taking the column vector $c=(c_0,..., c_{m+d})$ into $(a_0,..., a_{m+d})$ is linear.
The entry in the $j$-th row and $k$-th column of its matrix is
the binomial coefficients ${k \choose j}$, for $0 \le j,k \le m+d$, where as usual we set ${j \choose k}$ equal to $0$ if $j<k$. We also set ${0 \choose k} =1$. 
The matrix of $L$ is upper triangular, and all diagonal entries are equal to $1$.
Thus $L$ is invertible. The condition that $p$ be divisible by $(1+x)^d$ amounts to saying that $c_j=0$ for $0 \le j < d$. We therefore consider the matrix $L'$ obtained from $L$ by deleting the first $d$ columns. We claim that any square submatrix of $L'$ of size $m+1$ is invertible.
We omit the details of this claim; in fact the best proof of the claim is to use the first proof above.

If $p$ had fewer that $d+1$ terms, then at least $m+d+1 - d = m+1$ of the $a_j$ would vanish. 
Hence there is an $m+1$ by $m+1$ submatrix of $L'$
annihilating the column vector $c' = (0, c_d,...,c_{m+d})$. Since such matrices are invertible,
we obtain $c'= 0$. Hence $c=0$, and therefore $p=0$.
Therefore the only element divisible by $(1+x)^d$ with fewer than $d+1$ terms is $0$.

We use this result as an induction step. Assume that we have proved the result in dimension $n$. Let
$ y= (y_1,...,y_n)$, and put $s=\sum_{k=1}^n y_k$. The induction hypothesis guarantees that a nonzero polynomial divisible
by $s^j$ has at least $N(n,j)$ terms.

Let $p$ be homogeneous of degree $m+d$ in the $n+1$ variables $(y,x)$. Assume
that $p$ is a multiple of $(x+s)^d$. We wish to show that the number of distinct monomials in $p$ is at least
$N(n+1,d)$. By dehomogenization we have

$$ N(n+1,d) = \sum_{j=0}^d  N(n,j). \eqno (4) $$
We expand $p$ in two ways, writing

$$ p(x,y) = (x+s)^d \left(\sum_{j=0}^m h_j(y)x^j\right) = \sum_{j=0}^{m+d} 
A_j(y)x^j
\eqno (5) $$

First we note that, after dividing through by a power of $x$, we may 
assume without loss of generality that $h_0(y) \ne 0$,
and hence that $ A_0(y) \ne 0$.

Next we claim that at least $d+1$ of the $A_j$ in (5) are not zero.
To verify the claim, replace $x$ by $ws$ in (5).
Using homogeneity, we obtain
a polynomial in the single variable $w$  that is divisible by
$ (1+w)^d$. Hence the claim follows from the one variable case proved 
above. Hence there exist $d+1$ integers such that
$$  0 = j_0 < j_1 ... < j_d \eqno (6) $$
and for which $A_{j_k} \ne 0$. At each stage
we choose the integers in (6) minimally.

Next we note that $s^{d-k}$ divides $A_{j_k}$.
This result holds by expanding the middle term in (5)
by the binomial theorem, which yields an explicit formula
for the $A_j(y)$, and then proceeding inductively using
the minimality.

Each expression $A_{j_k}(y)x^{j_k}$ 
is divisible by a different power
of $x$ and hence all the resulting  terms are distinct.  Therefore
if $K$ is the number of terms in $p(x,y)$ then
$$ K \geq
\sum_{k=0}^d N(n,k) = N(n+1,d) . \eqno (7) $$
The inequality $K \ge N(n+1,d)$ from (7) completes the induction step. \end{proof}

In the proof we proved the following statement.
 If $P(t)$ is a nonzero multiple of $(1+t)^d$, 
then $P$ has at least $d+1$ nonzero terms. We then used this 
result in the inductive step.

Proposition 2.1 is the special case of the general situation
when the matrix of coefficients is diagonal. It is somewhat analogous to the degree estimates proved in [DLP].
To pass from the diagonal case to the general case we replace the number of monomials occurring in a polynomial with the rank
of the polynomial. We recall the needed linear algebra.

Let $R$ be a real-analytic function defined near the origin in ${\bf C}^n$.
Near the origin we write

$$ R(z, {\overline z}) = \sum _{a,b} c_{\alpha,\beta} z^a {\overline z}^b. \eqno (8) $$
Its rank ${\bf r}(R)$ 
is defined to be the rank of the possibly infinite matrix of coefficients $(c_{\alpha,\beta})$. 
The rank is the minimum number $k$  of linearly independent local holomorphic
functions $f_j$ and $g_j$ for which we can write

$$ R(z,{\overline z}) = \sum_{j=1}^k f_j(z) {\overline {g_j(z)}}. \eqno (9) $$
We allow $k$ to take the value $\infty$. 

We clarify the connection with the diagonal case. Suppose, for each $j$ that there is a multi-index $\alpha_j$
such that
$f_j(z) = c_j z^{\alpha_j}$ and $g_j(z) = z^{\alpha_j}$. After setting $x_j= |z_j|^2$ and using multi-index notation,
we can rewrite (9) in the form
$$ R(x) = \sum_{j=1}^{k} c_j x^{\alpha_j} \eqno (10) $$
where the $c_j$ are non-zero constants and the $\alpha_j$ are distinct multi-indices. 
Then ${\bf r}(R)$ equals the number of nonzero monomials in (10).

The following result includes Theorem 1.1 as a special case when $R$ is a polynomial.

\begin{theorem} Let $R(z, {\overline z})$ be a real-analytic function defined near the origin in ${\bf C}^n$.
Suppose that $R$ is not identically zero, and that $R$ is a multiple of $||z||^{2d}$. Then the rank of $R$ is at least
$N(n,d)$. Equality is possible: for example equality holds when $R=||z||^{2d}$. \end{theorem}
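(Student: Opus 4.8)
The plan is to prove the statement by reducing the general (non-diagonal) situation to the diagonal Proposition 2.1, systematically replacing the count of monomials by the rank, and to run an induction on the dimension $n$ that mirrors the inductive step in the proof of Proposition 2.1. Two facts about the rank are used throughout. First, writing $R(z,\overline z)=\sum c_{\alpha,\beta}z^\alpha{\overline z}^\beta$, the number ${\bf r}(R)$ is the rank of the coefficient matrix $(c_{\alpha,\beta})$; hence ${\bf r}(R)$ never increases when we pass to a submatrix, for instance the submatrix obtained by restricting to prescribed degrees. Second, it is convenient to polarize and write $R(z,\zeta)=(z\cdot\zeta)^d\,S(z,\zeta)$, where $z\cdot\zeta=\sum_i z_i\zeta_i$ is the polarization of $||z||^2$ and $S$ is holomorphic in $(z,\zeta)$ and not identically zero.

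For the base case $n=1$ there is nothing to prove, since $N(1,d)=1$ and $R\not\equiv 0$ forces ${\bf r}(R)\ge 1$. For the inductive step I would split off one variable, writing the $n$ variables as $(z,w)$ with $z\in{\bf C}^{n-1}$ and $w\in{\bf C}$, so that the squared norm is $||z||^2+|w|^2$ and, in polarized form, the linear form is $z\cdot\zeta+w\omega$. Expanding in the extra variables gives $R=\sum_{i,j}A_{ij}(z,\zeta)w^i\omega^j$, and the binomial theorem applied to $(z\cdot\zeta+w\omega)^d$ yields the explicit formula $A_{ij}=\sum_{l=0}^{\min(i,j)}\binom{d}{l}(z\cdot\zeta)^{d-l}S_{i-l,j-l}$. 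In particular each block $A_{ij}$ is divisible by $||z||^{2(d-\min(i,j))}$ in the remaining $n-1$ variables, so whenever $A_{ij}\not\equiv 0$ the induction hypothesis gives ${\bf r}(A_{ij})\ge N(n-1,d-\min(i,j))$. After dividing out the largest powers of $w$ and of $\omega$ dividing $R$ (which changes neither the rank nor the divisibility by the norm), a one-variable substitution collapsing the extra variables reduces matters to a nonzero multiple of $(1+t)^d$, and the one-variable count proved right after Proposition 2.1 produces $d+1$ nonzero blocks sitting at $d+1$ distinct levels whose associated divisibility exponents range over $0,1,\dots,d$ by a minimality argument. The arithmetic then matches, since $\sum_{k=0}^{d}N(n-1,d-k)=\sum_{j=0}^{d}N(n-1,j)=N(n,d)$.

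The main obstacle is the passage from ``there exist $d+1$ good blocks'' to a genuine lower bound on the rank of the whole matrix: unlike the count of monomials, the rank of a block matrix is in general neither bounded below nor above by the sum of the ranks of its diagonal blocks. To control this I would choose the $d+1$ blocks so that they occupy pairwise distinct powers of $w$ (distinct rows) and pairwise distinct powers of $\omega$ (distinct columns), and then exploit the divisibility noted above: since $A_{ij}$ is divisible by $(z\cdot\zeta)^{d-\min(i,j)}$, its coefficients of low degree in $\zeta$ all vanish, so on the columns indexed by sufficiently low-degree antiholomorphic monomials the off-diagonal blocks vanish identically. Restricting to such columns is meant to render the selected subarray block-triangular, and for a block-triangular matrix the rank is at least the sum of the ranks of the diagonal blocks. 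This is the exact Hermitian analogue of the step in Proposition 2.1 where distinct powers of $x$ force the monomials in the different pieces to be distinct. Making the bookkeeping precise --- choosing the blocks and the accompanying column sets so that the triangularity holds while each diagonal block retains rank at least $N(n-1,d-\min(i,j))$ --- is the delicate heart of the argument; the real-analytic case then presents no additional difficulty, since every manipulation above is valid for convergent power series.
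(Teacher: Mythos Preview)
Your outline diverges from the paper's proof and has a genuine gap precisely where you flag it. The proposed mechanism for block-triangularity does not work: the divisibility exponent of $A_{ij}$ is $d-\min(i,j)$, and an off-diagonal block $A_{i_k j_l}$ can have $\min(i_k,j_l)$ strictly \emph{larger} than both $\min(i_k,j_k)$ and $\min(i_l,j_l)$ (take for instance $i_0=j_1=0$, $i_1=j_0=5$, so the off-diagonal block sits at $(5,5)$). Such a block is then divisible by a \emph{lower} power of $z\cdot\zeta$ than either adjacent diagonal block, so it survives on every set of low-degree $\zeta$-columns on which the diagonal blocks survive. Restricting to low-degree antiholomorphic columns therefore does not kill the off-diagonal blocks preferentially, and no triangular structure emerges. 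Without a genuinely different device there is no reason the ranks of your $d+1$ selected blocks should add up inside ${\bf r}(R)$, and the proposal does not supply one; the ``bookkeeping'' you defer is in fact the entire content of the step.

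The paper sidesteps this difficulty by \emph{not} running a second induction on $n$. After passing to the lowest-order homogeneous part and dehomogenizing, it groups the coefficients $c_{\mu,\nu}$ of $p$ into ``super-diagonals'' indexed by the shift $\mu-\nu$. Because $(1+\|z\|^2)^d$ is itself diagonal, multiplication by it preserves each super-diagonal, so every nonzero super-diagonal of $p$ is individually a multiple of $(1+\|z\|^2)^d$. One then fixes a monomial order and takes the \emph{extremal} nonzero shift $\alpha$; the rows $[c_{\beta+\alpha,\ast}]$ for which $c_{\beta+\alpha,\beta}\ne 0$ are automatically linearly independent by a one-line triangularity argument (any entry $c_{\beta+\alpha,\gamma}$ with $\gamma<\beta$ would sit on a strictly larger shift, hence vanish). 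Thus ${\bf r}(p)$ is at least the number of nonzero entries on that single super-diagonal, and that super-diagonal, being a diagonal polynomial in the $|z_i|^2$ divisible by $(1+\|z\|^2)^d$, already has at least $N(n,d)$ terms by Proposition~2.1. One well-chosen super-diagonal carries the whole bound, so no block-rank additivity is ever needed.
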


\begin{proof} When $n=1$ the result is trivial, as $N(1,d)=1$ for all $d$, and we are saying only that $R$ is not identically $0$.
The special case $d=1$ corresponds to Huang's lemma,  but our argument is considerably different even in this case.

Write $R(z,{\overline z}) = ||z||^{2d} r(z, {\overline z})$, where $r$ is real-analytic in some neighborhood of the origin.
Consider the lowest order part $u$ of the Taylor expansion of $r$ at the origin. Note that the lowest order part
of $||z||^{2d}r$ is given by $||z||^{2d} u$. Since the matrix of coefficients of $||z||^{2d} u$ is a submatrix of the
matrix of coefficients of $R$, we get ${\bf r}(||z||^{2d}r) \ge {\bf r}(||z||^{2d}u)$.

We may therefore assume that
$$ R(z, {\overline z}) = ||z||^{2d} u(z, {\overline z}), \eqno (11) $$
where $u$ is homogeneous in the $z$ and ${\overline z}$ variables.
We write
$$ u(z, {\overline z}) = \sum_{|\mu|+|\nu|=m} c_{\mu,\nu} z^\mu {\overline z}^\nu. \eqno (12) $$ 

Now that we are in the polynomial case it is convenient to dehomogenize. We use different notation.
Assume that $z \in {\bf C}^{n-1}$ and put
$$ p(z,\bar{z}) = r(z,\bar{z}) (1 + ||z||^2)^d. $$
We must show that $p$ cannot be written as a squared norm with fewer terms than $M(n-1,d)= N(n,d)$. In other words, we wish to find
a lower bound on the rank of the matrix of coefficients of $p$.

We decompose $p(z, {\overline z})$ according to the following formula:
\begin{equation*}
\begin{split}
p(z, {\overline z}) & = 
\left(
\sum_\beta c_{\beta,\beta} z^\beta \bar{z}^\beta
\right)
+
\sum_{\alpha \not= 0}
\left(
\sum_\beta c_{\beta+\alpha,\beta} z^{\beta+\alpha} \bar{z}^\beta
\right)
+
\sum_{\alpha \not= 0}
\left(
\sum_\beta c_{\beta,\beta+\alpha} z^\beta \bar{z}^{\beta+\alpha}
\right)
\\
& =
\left(
\sum_\beta c_{\beta,\beta} \lvert z^\beta \rvert^2
\right)
+
\sum_{\alpha \not= 0}
z^\alpha
\left(
\sum_\beta c_{\beta+\alpha,\beta} \lvert z^\beta \rvert^2
\right)
+
\sum_{\alpha \not= 0}
\bar{z}^\alpha
\left(
\sum_\beta c_{\beta,\beta+\alpha} \lvert z^\beta \rvert^2
\right)
.
\end{split}
\end{equation*}
If only the first sum is nonzero (that is, if the matrix of
coefficients is diagonal), then we are already in the case of Proposition 2.1,
and the conclusion holds. Therefore we assume that one of the other two terms is nonzero,
and without loss of generality we suppose the second term is non-zero. (In the Hermitian case
the terms are conjugates of each other.) The above decomposition is invariant under multiplication
by $1+||z||^2$ and hence by $(1+||z||^2)^d$.  That is, if we
decompose $r$ as above
and multiply each term by $(1+||z||^2)^d$, we get the corresponding decomposition 
of $p$.

Next, impose a monomial order on the multi-indices $\alpha$.  That is,
we have a total well ordering on all monomials that respects multiplication:
if $\alpha < \beta$ then $\alpha + \gamma < \beta + \gamma$.  For example,
lexicographical ordering will suffice.
In this ordering, find the largest $\alpha$ such that
$$ \sum_\beta c_{\beta+\alpha,\beta} \lvert z^\beta \rvert^2 $$
is nonzero.
Let $\beta$ range over those
$c_{\beta+\alpha,\beta}$ that are nonzero.
We note that the vectors
$[ c_{\beta+\alpha,*} ]$ are linearly independent, because
$c_{\beta+\alpha,\gamma}$ must be zero for $\gamma < \beta$
by the extremality of $\alpha$.

Therefore ${\bf r}(p)$, the rank of the matrix of coefficients,
is bounded below by the number of nonzero terms in
$$
\sum_\beta c_{\beta+\alpha,\beta} \lvert z^\beta \rvert^2. \eqno (13)
$$
Since (13) is divisible by $(1+||z||^2)^d$, the inequality ${\bf r}(p) \ge M(n-1,d) = N(n,d)$
follows by Proposition 2.1.
\end{proof}

\begin{remark} There is a subtle difference between the polynomial case
and the power series case. Consider the polynomial $(1 + ||z||^2)^d$ as the germ at $0$ of a real-analytic function.
It is invertible, and hence there is a multiple of it with rank $1$.
This fact does not contradict Theorem 2.1; the conclusion there
in the real-analytic case requires that we work instead with $||z||^2$. In the polynomial case we pass back and forth between
$1+||z||^2$ and $|z_{n+1}|^2 + ||z||^2$ via bihomogenization. \end{remark}

\section{bibliography}

\medskip

[A] Artin, E., \"Uber die Zerlegung definiter Funktionen in Quadrate, Abh. Math. Sem. Univ. Hamburg 5
(1927), 110-115.

\medskip

[D1] D'Angelo, J., Inequalities from Complex Analysis, Carus Mathematical Monograph No. 28, 
Mathematics Association of America, 2002.

\medskip

[D2] D'Angelo, J., Hermitian analogues of Hilbert's $17$-th problem, preprint.

\medskip

[D3] D'Angelo, J., Complex variables analogues of Hilbert's Seventeenth Problem,
{\it International Journal of Mathematics}, Vol. 16, No. 6 (2005), 609-627.

\medskip
[DL] D'Angelo, J. and Lebl, J., Hermitian symmetric polynomials and CR
complexity, {\it Journal Geometric Analysis} (2010), (to appear).

\medskip 

[DLP] D'Angelo, J., Lebl, J.,  and Peters, H.,
Degree estimates for polynomials constant on hyperplanes,  {\it Michigan
Math.\ J.}, 55 (2007), no. 3, 693-713.

\medskip
[H] Huang, X., On a linearity problem for proper maps between balls in complex spaces
of different dimensions, {\it J. Diff.\ Geometry} 51 (1999), no 1, 13-33.

\medskip
[L] Lebl, J., Normal forms, Hermitian operators, and CR maps of spheres and
hyperquadrics,  {\it Michigan Math.\ J.} (to appear), arXiv:0906.0325.

\medskip

[PF] Pfister, A., Zur Darstellung definiter Funktionen als Summe von
Quadraten. {\it Invent.\ Math.} 4 (1967), 229--237.

\medskip

[Q] Quillen, Daniel G., On the Representation of Hermitian Forms
as Sums of Squares, {\it Invent.\ Math.} 5 (1968), 237-242.

\medskip

[S] Scheiderer, Claus, Positivity and sums of squares: a guide to recent results,
pages 271-324 in Emerging applications of algebraic geometry, IMA Vol. Math. Appl., 149, Springer, New York, 2009.

\end{document}